\newcommand{\ack}[1]{\par \noindent \textbf{Acknowledgements.} #1 }
\theoremstyle{plain}
\newtheorem{thm}{Theorem}[section]
\newtheorem{prop}[thm]{Proposition}
\theoremstyle{definition}
\newtheorem{defn}[thm]{Definition}
\newtheorem{exam}[thm]{Example}
\title{Linking Numbers for Handlebody-Links}
\author{Atsuhiko Mizusawa}
\address{Department of Mathematics, Faculty of Fundamental Science and Engineering, Waseda University, 3-4-1 Okubo, Shinjuku-ku, Tokyo 169-8555, Japan}
\email[Atsuhiko Mizusawa]{a\_mizusawa@suou.waseda.jp}
\keywords{Handlebody-knot, handlebody-link, linking number, elementary divisor}
\begin{document}

\begin{abstract}
As a generalization of the linking number, we construct a set of invariant numbers for two-component handlebody-links. These numbers are elementary divisors associated with the natural homomorphism from the first homology group of a component to that of the complement of another component.
\end{abstract}

\maketitle

\section{Introduction} \label{sec1}
\par
\textit{A handlebody-link} is an embedding of handlebodies into a 3-manifold \cite{Ishi}. Especially an embedding of one handlebody into a 3-manifold is called \textit{a handlebody-knot}. In case the genus of each component of a handlebody-link is one, it can be regarded as an ordinary link. 
\par
Handlebody-links are also regarded as \textit{neighborhood equivalence} class of spatial graphs \cite{Su}. Two spatial graphs are neighborhood equivalent if they have isotopic regular neighborhoods. Hence handlebody-links are represented by spatial graphs whose regular neighborhoods are isotopic to the handlebody-links. In the present paper, we use this representation for handlebody-links. \textit{A contraction move} of spatial graphs is a local transformation of spatial graphs shown in Fig. \ref{fig0}; contracting an edge $e$ and its inverse move, where this move is done in an embedded disc in a 3-manifold. In \cite{Ishi}, it is shown that two spatial graphs are neighborhood equivalent (i.e. they represent the same handlebody-links) if and only if they are transformed to each other by a sequence of contraction moves.
\begin{figure}[t]
$$
\raisebox{-27 pt}{\begin{overpic}[bb=0 0 559 352, width=100 pt]{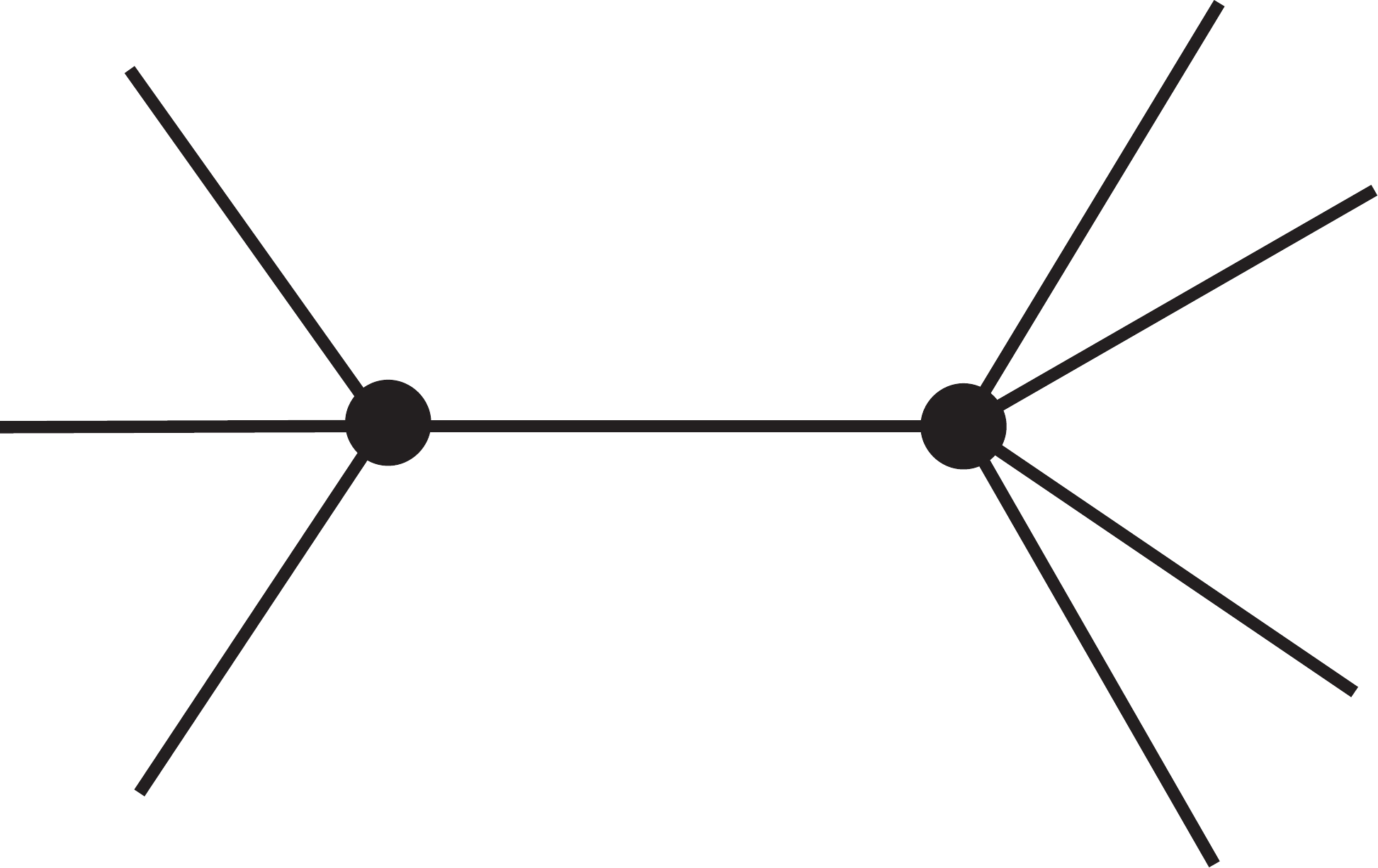}
\put(46, 23){$e$}
\end{overpic}}
\hspace{0.3 cm}\mbox{\LARGE{$\leftrightarrow$}}\hspace{0.3 cm}
\raisebox{-27 pt}{\includegraphics[bb= 0 0 323 352, height=63 pt]{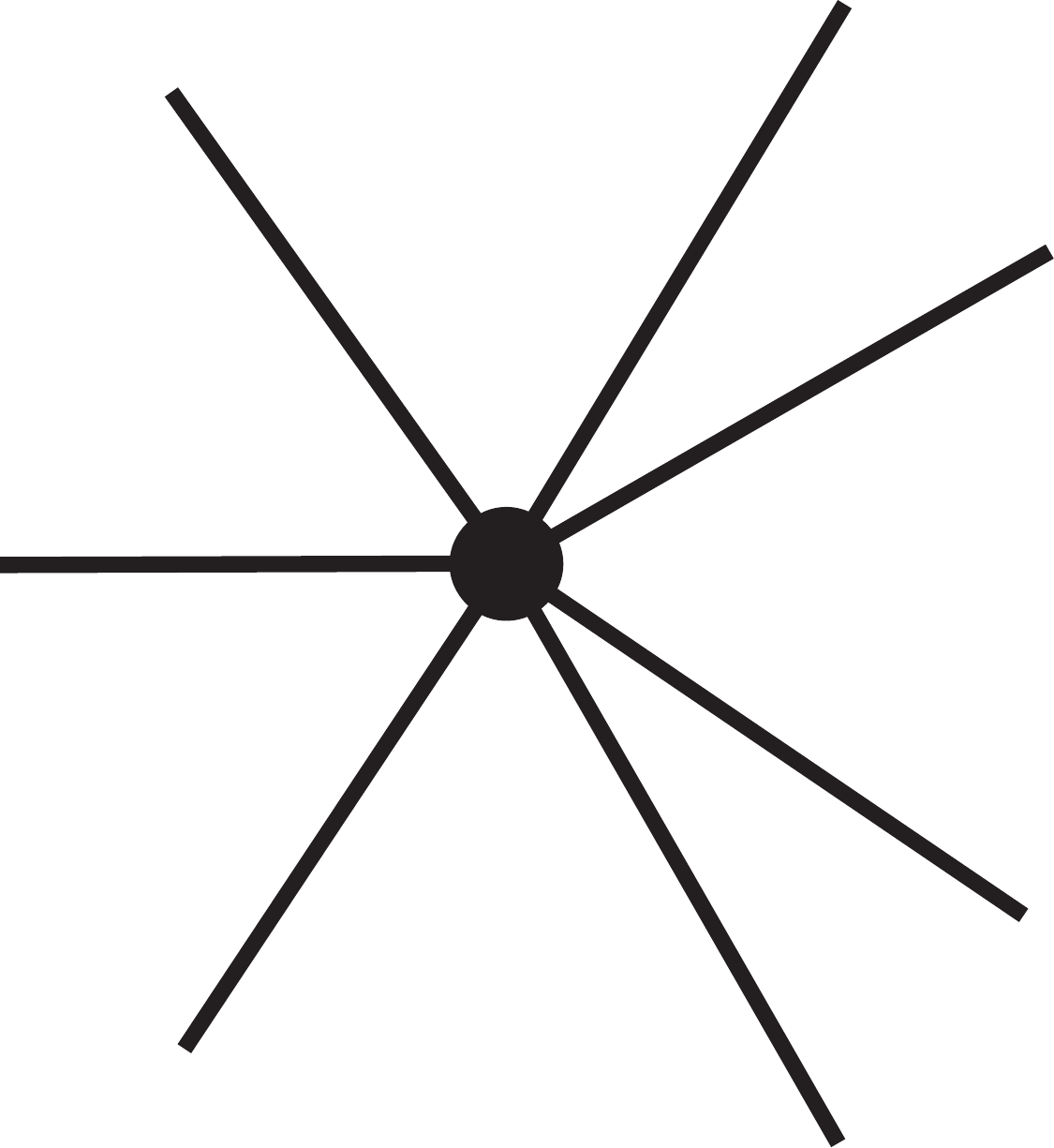}}
$$
\caption{A contraction move on the edge $e$.}\label{fig0}
\end{figure}
\par
In the ordinary knot theory, the linking number is an elementary and important invariant for oriented two-component links. In this paper, we construct an invariant for two-component handlebody-links in a 3-sphere $S^3$ like the linking number for ordinary links. We consider two first homology groups for each component of the two-component handlebody-links. The invariants for two-component handlebody-links are elementary divisors of a matrix whose entries are linking numbers of closed circles corresponding to the basis of one homology group and that of the other homology group. When a genus of each component of handlebody-links is one, thus they are ordinary links, our invariants coincide with the linking number of them. 
\par
The rest of the present paper is organized as follows. In Section \ref{sec2}, we define the invariant of two-component handlebody-links. We show an example of the invariant in Section \ref{sec3}. In Section \ref{sec4}, we see a geometrical meaning of the invariant that is a relationship to first homology groups of exterior spaces of handlebody-link components.

\section{Definition} \label{sec2}
\par
In this section, we construct the invariant numbers for two-component handlebody-links in $S^3$. In case both components are genus 1 (i.e. two-component links), these numbers correspond to the linking number.

\begin{defn}(\textit{Linking numbers for handlebody-links}) Let $L$ be a two-component handlebody-link in $S^3$, $h_1$ and $h_2$ be its components and $m, n$ be genera of them respectively. We fix bases of first homology groups $H_1(h_1)$ and $H_1(h_2)$ of $h_1$ and $h_2$; $e_1, \dots, e_m$ for $H_1(h_1)$ and $f_1, \dots , f_n$ for $H_1(h_2)$. $e_i$ and $f_j$ can be regarded as embedded closed oriented circles in the $S^3$. Then we make a $m \times n$ matrix $M$ whose $(i, j)$ entry is a linking number of $e_i$ and $f_j$:
\[
\left( \begin{array}{c c c}
lk(e_1, f_1) & \cdots & lk(e_1, f_n) \\
\vdots & \ddots & \vdots \\
lk(e_m, f_1) &\cdots &  lk(e_m, f_n)
\end{array} \right).
\]
Then we consider the elementary divisors $d_1|d_2|\dots|d_l$ (for some $0 \leq l \leq \min(m,n)$, $d_i \in \mathbb{Z}$) of $M$ as $\mathbb{Z}$-module. Note that $d_1, d_2, \dots , d_l$ are unique up to signs. If $0<l$, choosing positive signs, we define 
$$ Lk(h_1, h_2) = \{|d_1|, |d_2|, \dots , |d_l| \},$$
otherwise $Lk(h_1, h_2) = \{0\}$. 
\end{defn}

\par
If $h_1$ and $h_2$ are separated, the linking numbers $Lk(h_1, h_2)$ is equal to $\{0\}$. We remark that the linking number above $lk(\cdot, \cdot): H_1(h_1)\times H_1(h_2) \rightarrow \mathbb{Z}$ is bilinear for sums of elements of the first homology groups and scalar multiplying over $\mathbb{Z}$, i.e. $lk(cx_1+dx_2, y)=c\,lk(x_1, y)+d\,lk(x_2, y)$, for $x_1, x_2 \in H_1(h_1), y \in H_1(h_2)$ and $c, d \in \mathbb{Z}$, and the same relation holds for the second argument.

\begin{thm} 
$Lk(h_1, h_2)$ is independent of the ways taking bases of $H_1(h_1)$ and $H_1(h_2)$. Thus $Lk(h_1, h_2)$ is an invariant of two-component handlebody-links.
\end{thm}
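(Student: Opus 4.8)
The plan is to reduce the statement to the uniqueness of the Smith normal form of an integer matrix. First I would record the elementary fact that, since $H_1(h_1)$ and $H_1(h_2)$ are free $\mathbb{Z}$-modules of ranks $m$ and $n$, any other bases $e_1', \dots, e_m'$ of $H_1(h_1)$ and $f_1', \dots, f_n'$ of $H_1(h_2)$ are related to the original ones by
\[
e_i' = \sum_{k=1}^{m} P_{ik}\, e_k, \qquad f_j' = \sum_{l=1}^{n} Q_{jl}\, f_l,
\]
where $P = (P_{ik}) \in \mathrm{GL}_m(\mathbb{Z})$ and $Q = (Q_{jl}) \in \mathrm{GL}_n(\mathbb{Z})$; the point to emphasise is that these transition matrices are integral and invertible \emph{over $\mathbb{Z}$} (unimodular), because a change of basis of a free $\mathbb{Z}$-module has an inverse of the same kind.

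Next, using the bilinearity of $lk(\cdot,\cdot)$ recorded in the remark preceding the theorem — in particular the fact that $lk(e_i, f_j)$ depends only on the homology classes, so the matrix $M$ is genuinely attached to the pair of bases and not to chosen circle representatives — I would compute
\[
lk(e_i', f_j') = \sum_{k,l} P_{ik} Q_{jl}\, lk(e_k, f_l),
\]
that is, $M' = P\, M\, Q^{\mathsf{T}}$. Since $P \in \mathrm{GL}_m(\mathbb{Z})$ and $Q^{\mathsf{T}} \in \mathrm{GL}_n(\mathbb{Z})$, the matrices $M$ and $M'$ are equivalent over $\mathbb{Z}$, hence have the same Smith normal form. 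By the uniqueness of the Smith normal form, the number $l$ of nonzero diagonal entries (the rank of $M$) is the same, and the divisors $d_1 \mid d_2 \mid \cdots \mid d_l$ agree up to sign. Therefore $Lk(h_1, h_2) = \{|d_1|, \dots, |d_l|\}$, or $\{0\}$ when $l = 0$, is independent of the chosen bases.

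Finally, to conclude that $Lk(h_1, h_2)$ is an invariant of the handlebody-link, I would observe that an ambient isotopy of $S^3$ carrying $L$ to $L'$ carries $h_1, h_2$ to isotopic handlebodies, induces isomorphisms $H_1(h_i) \cong H_1(h_i')$ sending bases to bases, and preserves all linking numbers of the corresponding circles; combined with the basis-independence just proved, this gives the claimed invariance. I do not anticipate a genuine obstacle: the only points needing care are the integrality and unimodularity of the transition matrices (rather than mere invertibility over $\mathbb{Q}$) and the observation that bilinearity lets us treat $M$ as a function of homology classes, after which the uniqueness of the Smith normal form does all the remaining work.
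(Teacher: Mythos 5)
Your proposal is correct and follows essentially the same route as the paper: a change of bases multiplies $M$ on the left and right by unimodular integer matrices, and the elementary divisors (Smith normal form) are unchanged under such equivalence. You merely spell out the transition matrices, the bilinearity, and the isotopy-invariance step more explicitly than the paper's one-line argument does.
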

\begin{proof}
Changing a basis of the first homology group $H_1(h_1)$ (resp. $H_1(h_2)$) causes multiplying an $m\times m$ (resp. an $n\times n$) regular matrix with integer entries from left (resp. right) to $M$, and these operations do not change the elementary divisors of $M$.
\end{proof}

\section{Example} \label{sec3}
\par
We show a calculation of $Lk(h_1, h_2)$ for the handlebody-link $L= h_1 \cup h_2$ in Fig. \ref{fig1}, where we represent components of the handlebody-link by bouquet graphs. The original handlebody-link is a regular neighborhood of the graphs. By virtue of the bouquet graph representation, we see bases of first homology groups $H_1(h_1)$ and $H_1(h_2)$ of the components explicitly as the oriented loop edges $e_1, e_2, e_3$ and $f_1, f_2, f_3, f_4$ of the graphs.
\begin{figure}[ht]
\[
\begin{overpic}[bb=0 0 421 389, width=150 pt]{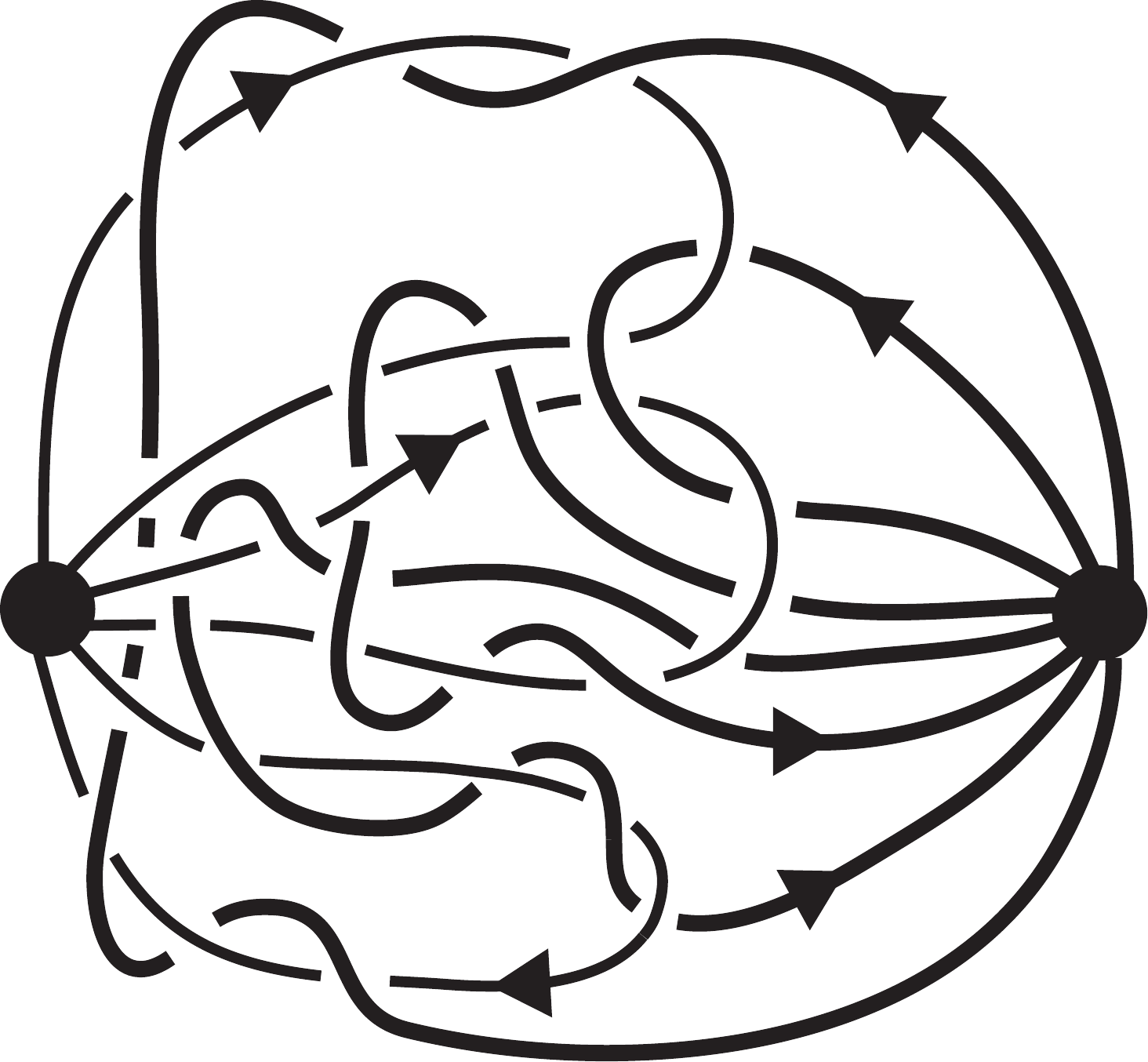}
\put(33, 115){$e_1$}
\put(57, 70){$e_2$}
\put(65, 18){$e_3$}
\put(110, 82){$f_1$}
\put(94, 31){$f_2$}
\put(107, 12){$f_3$}
\put(115, 109){$f_4$}
\put(-12, 58){$h_1$}
\put(152, 58){$h_2$}
\end{overpic}
\]
\caption{A handlebody-link $L= h_1 \cup h_2$.}\label{fig1}
\end{figure}
\begin{exam}
A matrix $M$ of linking numbers of the bases of the first homology groups of $h_1$ and $h_2$ is calculated as 
\[
\left( \begin{array}{c c c c}
-1 & -1 & 0 & 2 \\
1 & -3 & -2 & 0 \\
0 & 0 & 2 & -2 
\end{array} \right).
\]
The elementary divisors of $M$ are $1\,|\,2\,|\,4$ up to signs. Thus $Lk(h_1, h_2) =\{1,2,4\}.$ 
\end{exam}
\par
We remark that by sliding a loop along another loop, we have a new bouquet graph representing the same handlebody-link. This corresponds to adding or subtracting a row or column to other row or column. Therefore we can choose bouquet graphs such that the non-zero entries of the linking matrix of their loops is exactly the elementary divisors of the matrix. Thus the linking number of the handlebody-link is realized by such bouquet graphs.

\section{Geometrical meanings of  $Lk(h_1, h_2)$} \label{sec4}
\par
In this section, we show a relationship between the linking numbers $Lk(h_1, h_2)$ of handlebody-links and the first homology group of a complement space of one component $h_1$ (resp. $h_2$) in $S^3$ with relations derived from another component $h_2$ (resp. $h_1$). 
\par
First we consider the first homology group $H_1(S^3\setminus h_1)$ of the complement of $h_1$. Remark that $H_1(S^3\setminus h_1) = \mathbb{Z}^{m}$ where $m$ is a genus of $h_1$. Let $\{f_1, f_2, \dots, f_n\}$ be a basis of the first homology group $H_1(h_2)$ of $h_2$ where $n$ is a genus of $h_2$. Since $h_2$ is in the complement space $S^3\setminus h_1$, we can regard the elements of the basis as oriented closed circles in $S^3\setminus h_1$ and as elements of $H_1(S^3\setminus h_1)$. Then we consider a quotient group of $H_1(S^3\setminus h_1)$ by a subgroup generated by $f_1, f_2, \dots, f_n$;
$$A_1 = H_1(S^3\setminus h_1)/\left<f_1, f_2, \dots, f_n \right>.$$
$A_1$ is independent of choices of the basis of $H_1(h_2)$ and determined only by $L$. $A_1$ is a finitely generated abelian group and have a form
$$A_1 = \mathbb{Z}^{m-k} \oplus {\rm Tor}(A_1),$$
for some $k$, where ${\rm Tor}(A_1)$ is a torsion part of $A_1$. If the component $h_2$ is separated from $h_1$, the circles of the basis of $H_1(h_2)$ are trivial circles in $S^3\setminus h_1$ and $A_1= H_1(S^3\setminus h_1)= \mathbb{Z}^{m}$. By tangling $h_2$ to $h_1$, a part of $\mathbb{Z}^{m}$ degenerates to the torsion part ${\rm Tor}(A_1)$. So we think ${\rm Tor}(A_1)$ has information of a tangle between $h_1$ and $h_2$. 
\par
On the other hand, we can construct a group $A_2$ as above exchanging the roles of $h_1$ and $h_2$, 
$$A_2 = \mathbb{Z}^{n-k'} \oplus {\rm Tor}(A_2),$$
for some $k'$, where ${\rm Tor}(A_2)$ is a torsion part of $A_2$. ${\rm Tor}(A_2)$ also has information of a tangle between $h_1$ and $h_2$. Then we have the next proposition.
\begin{prop}
Suppose that $m$ is not smaller than $n$. Then $A_1 \cong \mathbb{Z}^{m-n} \oplus A_2$.
\end{prop}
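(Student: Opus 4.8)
The plan is to identify both $A_1$ and $A_2$ with cokernels of the linking matrix $M$ — one for $M$ itself and one for its transpose — and then to compare the two cokernels through the Smith normal form of $M$. The starting point is the fact that the linking pairing
\[
lk \colon H_1(h_1) \times H_1(S^3 \setminus h_1) \longrightarrow \mathbb{Z}
\]
is unimodular. Indeed, by Alexander duality applied to the compact polyhedron $h_1 \subset S^3$ (equivalently, to the exterior $E_1 = S^3 \setminus \operatorname{int} N(h_1)$, which is homotopy equivalent to $S^3 \setminus h_1$), one has $H_1(S^3 \setminus h_1) \cong H^1(h_1) \cong \operatorname{Hom}(H_1(h_1), \mathbb{Z}) \cong \mathbb{Z}^{m}$, and this isomorphism is, up to an overall sign, the map sending a class $[\gamma]$ to the functional $x \mapsto lk(x, \gamma)$. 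Hence, for the chosen basis $e_1, \dots, e_m$ of $H_1(h_1)$, there is a dual basis $\mu_1, \dots, \mu_m$ of $H_1(S^3 \setminus h_1)$ characterized by $lk(e_i, \mu_k) = \delta_{ik}$.

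First I would rewrite the subgroup $\langle f_1, \dots, f_n \rangle$ in terms of this dual basis. Regarding each $f_j$ as a class in $H_1(S^3 \setminus h_1)$ and using bilinearity of $lk$, one gets $f_j = \sum_{k=1}^{m} lk(e_k, f_j)\,\mu_k = \sum_{k=1}^{m} M_{kj}\,\mu_k$, so $\langle f_1, \dots, f_n \rangle$ is precisely the image of the homomorphism $\mathbb{Z}^n \to \mathbb{Z}^m$ represented by $M$. Therefore
\[
A_1 \;\cong\; \mathbb{Z}^m / M\mathbb{Z}^n \;=\; \operatorname{coker}\bigl(M \colon \mathbb{Z}^n \to \mathbb{Z}^m\bigr).
\]
Running the same argument with the roles of $h_1$ and $h_2$ exchanged, and noting that the linking matrix of the ordered pair $(h_2, h_1)$ is $M^{T}$, gives $A_2 \cong \operatorname{coker}(M^{T} \colon \mathbb{Z}^m \to \mathbb{Z}^n)$.

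Next I would invoke the Smith normal form: write $M = U D V$ with $U \in \mathrm{GL}_m(\mathbb{Z})$, $V \in \mathrm{GL}_n(\mathbb{Z})$, and $D$ the $m \times n$ matrix whose only nonzero entries are $D_{ii} = d_i$ for $1 \le i \le l$, where $d_1 \mid d_2 \mid \cdots \mid d_l$ are the elementary divisors of $M$ and $l \le n \le m$. Since multiplication by $U$ and $V$ are isomorphisms, $\operatorname{coker}(M) \cong \operatorname{coker}(D)$ and $\operatorname{coker}(M^{T}) \cong \operatorname{coker}(D^{T})$; reading these off directly yields
\[
A_1 \cong \mathbb{Z}/d_1 \oplus \cdots \oplus \mathbb{Z}/d_l \oplus \mathbb{Z}^{m-l}, \qquad A_2 \cong \mathbb{Z}/d_1 \oplus \cdots \oplus \mathbb{Z}/d_l \oplus \mathbb{Z}^{n-l}.
\]
Comparing the two expressions, and using $\mathbb{Z}^{m-l} \cong \mathbb{Z}^{m-n} \oplus \mathbb{Z}^{n-l}$ (which is where the hypothesis $m \ge n \ge l$ enters), one obtains $A_1 \cong \mathbb{Z}^{m-n} \oplus A_2$, which is the assertion; along the way one also sees that the integers $k$ and $k'$ in the statement both equal $l$ and that $\operatorname{Tor}(A_1) \cong \operatorname{Tor}(A_2)$.

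The step I expect to be the main obstacle is the first one: making precise that the linking pairing between $H_1(h_1)$ and $H_1(S^3 \setminus h_1)$ is unimodular in this handlebody setting, and hence that $A_1$ is genuinely the cokernel of $M$. Once $A_1$ and $A_2$ are both expressed through the single matrix $M$ up to transpose, everything else is routine finitely generated abelian group theory, and $m \ge n$ is used only to ensure that $\mathbb{Z}^{m-n}$ makes sense.
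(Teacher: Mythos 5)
Your proposal is correct and follows essentially the same route as the paper: identify $A_1$ with $\operatorname{coker}(M)$ and $A_2$ with $\operatorname{coker}({}^t M)$ and compare via the Smith normal form. The only difference is cosmetic — the paper obtains the dual (meridian) basis of $H_1(S^3\setminus h_1)$ geometrically from a bouquet-graph representative and reads off the coefficients as winding numbers, whereas you justify the same identification by invoking Alexander duality and unimodularity of the linking pairing.
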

\begin{proof}
It is sufficient to show ${\rm Tor}(A_1)\cong{\rm Tor}(A_2)$. We see a form of ${\rm Tor}(A_1)$. Let $\Gamma_1$ and $\Gamma_2$ be bouquet graphs representing the components $h_1$ and $h_2$ respectively, i.e. the regular neighborhood of $\Gamma_1$ (resp. $\Gamma_2$) is equivalent to $h_1$ (resp. $h_2$). Let $\{e_1, e_2, \dots, e_m \}$ and $\{f_1, f_2, \dots, f_n \}$ be edges of $\Gamma_1$ and $\Gamma_2$. They are regarded as bases of $H_1(h_1)$ and $H_1(h_2)$. We take a basis $\{x_1, x_2, \dots, x_m \}$ of $H_1(S^3\setminus h_1) = H_1(S^3\setminus \Gamma_1)=\mathbb{Z}^{m}$ such that $x_i$ is a closed oriented circle that rounds the edge $e_i$ of $\Gamma_1$ one time. Each $f_j$, as a close oriented circle in $S^3\setminus h_1$, is represented by the basis of $H_1(S^3\setminus \Gamma_1)$ as
\begin{eqnarray*}
& &f_j = a_{1j} x_1 + a_{2j} x_2 + \cdots + a_{mj} x_m, \,\,\, a_{ij} \in \mathbb{Z}, \\ 
& &\hspace{5.8cm}(1\leq j \leq n).
\end{eqnarray*}
From the choice of basis, the coefficient $a_{ij}$ is how many times the edge $f_j$ of $\Gamma_2$ rounds the edge $e_i$ of $\Gamma_1$. These observations lead a representation of $A_1 = H_1(S^3\setminus h_1)/\left<f_1, f_2, \dots, f_n \right>$;
$$\mathbb{Z}^{n} \overset{\psi}{\longrightarrow} \mathbb{Z}^{m} (= H_1(S^3\setminus h_1)) \overset{\varphi}{\longrightarrow} A_1 \longrightarrow 0,$$
which is an exact sequence of $\mathbb{Z}$-module homomorphisms such that ${\rm \,Im}\, \psi={\rm \,ker} \,\varphi$ and $\varphi$ is a surjection, where a matrix representation for $\psi$ is equal to $M$ in Section \ref{sec2} (assuming appropriate orientations for edges of $\Gamma_1$ and $\Gamma_2$). By appropriate change of basis of $H_1(S^3\setminus h_1)$ and relations for them (i.e. multiplying regular matrices from both sides of $M$), we have a rectangular diagonal matrix 
\[
\left( \begin{array}{c c c c c c}
d_1 &  & & & & \\
 & d_2 & & & O & \\
 & &  \ddots & & & \\
 & & & d_l & & \\
 & & & & 0 & \\
 & O & & & & \ddots \\
 & & & & & 
\end{array} \right),
\]
where $d_1|d_2| \cdots |d_l$ are elementary divisors of $M$. Therefore $A_1$ have a form 
$$A_1 = \mathbb{Z}^{m-l} \oplus \mathbb{Z} / d_1 \mathbb{Z} \oplus \cdots \oplus \mathbb{Z} / d_l \mathbb{Z}$$
and we have
$${\rm Tor}(A_1) = \mathbb{Z} / d_1 \mathbb{Z} \oplus \cdots \oplus \mathbb{Z} / d_l \mathbb{Z}.$$
\par 
We do the same operation for $H_1(S^3\setminus h_2)$ as above and have a representation of the abelian group $A_2$; 
$$\mathbb{Z}^{m} \overset{\psi'}{\longrightarrow} \mathbb{Z}^{n} (= H_1(S^3\setminus h_2)) \overset{\varphi'}{\longrightarrow} A_2 \longrightarrow 0,$$
where a matrix representation for $\psi'$ is $^t M$, a transposed matrix of $M$. Since elementary divisors of $^t M$ is equal to those of $M$, we have a form of $A_2$, 
$$A_2 = \mathbb{Z}^{n-l} \oplus \mathbb{Z} / d_1 \mathbb{Z} \oplus \cdots \oplus \mathbb{Z} / d_l \mathbb{Z}$$
and we have
$${\rm Tor}(A_2) = \mathbb{Z} / d_1 \mathbb{Z} \oplus \cdots \oplus \mathbb{Z} / d_l \mathbb{Z}.$$
Therefore ${\rm Tor}(A_1) \cong {\rm Tor}(A_2)$.
\end{proof}
\par
$A_1$ and $A_2$ have the same torsion part $\mathbb{Z} / d_1 \mathbb{Z} \oplus \cdots \oplus \mathbb{Z} / d_l \mathbb{Z}$, which have the information of tangle of $h_1$ and $h_2$, and this is what the linking numbers $Lk(h_1, h_2)$ indicate. We remark that if $d_i = 1$, $\mathbb{Z} / d_i \mathbb{Z}$ is a trivial group and does not appear explicitly in the torsion part. When we calculate the linking number from the torsion part, we also need the number $l$ which is the difference of rank between $A_1$ and $H_1(S^3\setminus h_1)$ or $A_2$ and $H_1(S^3\setminus h_2)$.

\ack{The author would like to thank Prof. Jun Murakami for helpful discussions. He also would like to thank Atsushi Ishii, Ayumu Inoue, Kengo Kishimoto, the participants of Tohoku Knot Seminar 2012 and the referee for their helpful comments.}

\end{document}